\begin{document}

\newtheorem{theorem}[subsection]{Theorem}
\newtheorem{proposition}[subsection]{Proposition}
\newtheorem{lemma}[subsection]{Lemma}
\newtheorem{corollary}[subsection]{Corollary}
\newtheorem{conjecture}[subsection]{Conjecture}
\newtheorem{prop}[subsection]{Proposition}
\numberwithin{equation}{section}
\newcommand{\mr}{\ensuremath{\mathbb R}}
\newcommand{\mc}{\ensuremath{\mathbb C}}
\newcommand{\dif}{\mathrm{d}}
\newcommand{\intz}{\mathbb{Z}}
\newcommand{\ratq}{\mathbb{Q}}
\newcommand{\natn}{\mathbb{N}}
\newcommand{\comc}{\mathbb{C}}
\newcommand{\rear}{\mathbb{R}}
\newcommand{\prip}{\mathbb{P}}
\newcommand{\uph}{\mathbb{H}}
\newcommand{\fief}{\mathbb{F}}
\newcommand{\majorarc}{\mathfrak{M}}
\newcommand{\minorarc}{\mathfrak{m}}
\newcommand{\sings}{\mathfrak{S}}
\newcommand{\fA}{\ensuremath{\mathfrak A}}
\newcommand{\mn}{\ensuremath{\mathbb N}}
\newcommand{\mq}{\ensuremath{\mathbb Q}}
\newcommand{\half}{\tfrac{1}{2}}
\newcommand{\f}{f\times \chi}
\newcommand{\summ}{\mathop{{\sum}^{\star}}}
\newcommand{\chiq}{\chi \bmod q}
\newcommand{\chidb}{\chi \bmod db}
\newcommand{\chid}{\chi \bmod d}
\newcommand{\sym}{\text{sym}^2}
\newcommand{\hhalf}{\tfrac{1}{2}}
\newcommand{\sumstar}{\sideset{}{^*}\sum}
\newcommand{\sumprime}{\sideset{}{'}\sum}
\newcommand{\sumprimeprime}{\sideset{}{''}\sum}
\newcommand{\sumflat}{\sideset{}{^\flat}\sum}
\newcommand{\shortmod}{\ensuremath{\negthickspace \negthickspace \negthickspace \pmod}}
\newcommand{\V}{V\left(\frac{nm}{q^2}\right)}
\newcommand{\sumi}{\mathop{{\sum}^{\dagger}}}
\newcommand{\mz}{\ensuremath{\mathbb Z}}
\newcommand{\leg}[2]{\left(\frac{#1}{#2}\right)}
\newcommand{\muK}{\mu_{\omega}}
\newcommand{\thalf}{\tfrac12}
\newcommand{\lp}{\left(}
\newcommand{\rp}{\right)}
\newcommand{\Lam}{\Lambda_{[i]}}
\newcommand{\lam}{\lambda}
\newcommand{\af}{\mathfrak{a}}
\newcommand{\fb}{\mathfrak{b}}
\newcommand{\sw}{S(X,Y;\phi,\psi)}
\newcommand{\lz}{\left(}
\newcommand{\pz}{\right)}
\newcommand{\bfrac}[2]{\lz\frac{#1}{#2}\pz}
\newcommand{\odd}{\mathrm{\ primary}}
\newcommand{\even}{\text{ even}}
\newcommand{\res}{\mathrm{Res}}

\theoremstyle{plain}
\newtheorem{conj}{Conjecture}
\newtheorem{remark}[subsection]{Remark}

\makeatletter
\def\widebreve{\mathpalette\wide@breve}
\def\wide@breve#1#2{\sbox\z@{$#1#2$}%
     \mathop{\vbox{\m@th\ialign{##\crcr
\kern0.08em\brevefill#1{0.8\wd\z@}\crcr\noalign{\nointerlineskip}%
                    $\hss#1#2\hss$\crcr}}}\limits}
\def\brevefill#1#2{$\m@th\sbox\tw@{$#1($}%
  \hss\resizebox{#2}{\wd\tw@}{\rotatebox[origin=c]{90}{\upshape(}}\hss$}
\makeatletter

\title[One-level density of quadratic twists of $L$-functions]{One-level density of quadratic twists of $L$-functions}

\author{Peng Gao and Liangyi Zhao}

\begin{abstract}
 In this paper, we investigate the one-level density of low-lying zeros of quadratic twists of automorphic $L$-functions under the generalized Riemann hypothesis and the Ramanujan-Petersson conjecture. We improve upon the known results using only functional equations for quadratic Dirichlet $L$-functions.
\end{abstract}

\maketitle

\noindent {\bf Mathematics Subject Classification (2010)}: 11M26, 11M50, 11F66 \newline

\noindent {\bf Keywords}: one-level density, low-lying zeros, quadratic twists, automorphic $L$-function

\section{Introduction}
\label{sec 1}

  According to the Langlands program (see \cite{Langlands}), the most general $L$-function is that attached to an
automorphic representation of $\text{GL}_N$ over a number field, which in turn can be written as products of the $L$-functions attached to cuspidal automorphic representations of $\text{GL}_M$ over $\mq$. \newline

  Let $L(s, \pi)$ be the $L$-function attached to an automorphic cuspidal representation $\pi$ of $\text{GL}_{M}$ over $\mq$. For $\Re(s)$
large enough, $L(s, \pi)$ can be expressed as an Euler product of the form
\begin{align*}
 L(s, \pi)=\prod_pL(s, \pi_p)=\prod_p\prod^M_{j=1}(1-\alpha_{\pi}(p, j)p^{-s})^{-1}.
\end{align*}

The order of vanishing of $L(s, \pi)$ at the central point has rich arithmetic applications as can be seen from the Birch and Swinnerton-Dyer
conjecture on the rank of an elliptic curve. One way to address the non-vanishing issue is to apply the density conjecture of N. Katz
and P. Sarnak \cites{KS1, K&S}, which suggests that the distribution of zeros near the central point of a family of $L$-functions is the same as that of eigenvalues near $1$ of a corresponding classical compact group.  The truth of the density conjecture would imply that $L(\half, \chi) \neq 0$ for almost all quadratic Dirichlet characters $\chi$. \newline

In this direction, it was estbalished unconditionally by K. Soundararajan \cite{sound1} that $L(\half, \chi_{8d}) \neq 0$ for at least 87.5\% of odd square-free $d \geq 0$.
Here and after, $\chi_d=\leg {d}{\cdot}$ stands for the Kronecker symbol.  Conditionally under the truth of the generalized Riemann hypothesis (GRH), A. E. \"{O}zluk and C. Snyder \cite{O&S} computed the one level density for the family of quadratic Dirichlet $L$-functions. Their result implies that, after optimizing the test function involved as done in \cites{B&F, ILS}, we have $L(1/2, \chi_d) \neq 0$ for at least $(19-\cot \frac{1}{4})/16 = 94.27\ldots \%$ of the fundamental discriminants $|d| \leq X$. \newline

  The work of  A. E. \"{O}zluk and C. Snyder has been extended by M. O. Rubinstein \cite{Ru} to all $n$-level densities for families of
quadratic twists of automorphic $L$-functions. In \cite{Gao}, the first-named author computed the $n$-level densities for the family of quadratic Dirichlet $L$-functions under GRH. In \cite{G&Zhao2}, the authors studied the one-level
density for the family for quadratic twists of modular $L$-functions under GRH. In \cite{ER-G&R}, A. Entin, E. Roditty-Gershon and Z. Rudnick
considered the $n$-level densities for the family for quadratic Dirichlet $L$-functions over function field. For other works on the $n$-level
densities of families of quadratic twists of $L$-functions and their relations to the random matrix theory, see \cites{A&B, BFK21-1, B&F, MS,
MS1, Miller1, FPS,FPS1,FPS2, HMM,HKS, G&Zhao4, G&Zhao5, G&Zhao6, G&Zhao9}. \newline

   The aim of this paper to compute the one-level density for the family for quadratic twists of automorphic $L$-functions under GRH.
For this, we fix a self-contragredient representation $\pi$ of $\text{GL}_{M}$ over $\mq$ so that $\pi = \tilde{\pi}$. As the cases of $M=1,2$ have been studied in \cite{O&S, G&Zhao2}, we assume that $M \geq 3$. We also assume the Ramanujan-Petersson conjecture, which implies that
\begin{equation}
\label{Ramanujanconj}
   |\alpha_{\pi}(p, j)| \leq 1.
\end{equation}

    The Rankin-Selberg symmetric square $L$-function $L(s, \pi \otimes \pi)$ factors as the product of the symmetric and exterior square $L$-functions
( see \cite[p. 139]{BG}):
\begin{align}
\label{RankinSelbergdecomp}
 L(s, \pi \otimes \pi)=L(s, \vee^2)L(s, \wedge^2),
\end{align}
   and has a simple pole at $s = 1$  which is carried by one of the two factors. Write the
order of the pole of $L(s, \wedge^2)$ as $(\delta(\pi) + 1)/2$  so that $\delta(\pi) = \pm 1$. The order of the pole of $L(s, \vee^2)$ then equals to $(1-\delta(\pi) )/2$.   In this paper, we restrict our attention to those $\pi$ with $\delta(\pi)=-1$.  The case in which $\delta(\pi)=1$ will be discussed later at the end of this section. \newline

   We are interested in the following family of quadratic twists of $L$-functions given by
\begin{align*}
 \mathcal F = \big\{ L(s, \pi \otimes \chi_{8d}) : d \text{ positive, odd and square-free} \big\}.
\end{align*}

   Note that $\chi_{8d}$ is a primitive quadratic Dirichlet character modulo $8d$ when $d$ is positive, odd and square-free. Any $L(s, \pi
\otimes \chi_{8d}) \in \mathcal F$ has an Euler product given by (see \cite[Section 3.6]{Ru})
\begin{align*}
 L(s, \pi \otimes \chi_{8d})=\prod_p\prod^M_{j=1}(1-\chi_{8d}(p)\alpha_{\pi}(p, j)p^{-s})^{-1}.
\end{align*}

  As $\pi = \tilde{\pi}$, $L(s, \pi \otimes \chi_{8d})$  has a functional equation of the form
\begin{align*}
 \Lambda(s,  \pi \otimes \chi_{8d}):=&\pi^{-Ms/2}\prod^M_{j=1}\Gamma \Big( \frac {s+\mu_{\pi \otimes \chi_{8d}}(j)}{2}\Big) L(s, \pi \otimes
\chi_{8d}) = \epsilon(s, \pi \otimes \chi_{8d})\Lambda(1-s,  \pi \otimes \chi_{8d}),
\end{align*}
  where $\mu_{\pi \otimes \chi_{8d}}(j) \in \mc$ and satisfies $\Re(\mu_{\pi \otimes \chi_{8d}}(j)) \geq 0$ since we are assuming GRH. \newline

   Moreover, as $\delta(\pi) =-1$, we have
\begin{align*}
 \epsilon(s, \pi \otimes \chi_{8d})=Q^{-s+1/2}_{\pi \otimes \chi_{8d}},
\end{align*}
  where $Q_{\pi \otimes \chi_{8d}}$ is the conductor of $L(s, \pi \otimes \chi_{8d})$ and it is known (see \cite[Section 2]{M&T-B}) that
\begin{align}
\label{Qbound}
  Q_{\pi \otimes \chi_{8d}} \ll d^M.
\end{align}

  As we assume GRH, we may write the zeros of $\Lambda(s,  \pi \otimes \chi_{8d})$ as $\half+i \gamma_{\pi \otimes \chi_{8d}, j}$ with
$\gamma_{\pi \otimes \chi_{8d}, j} \in \mr$ and we order them as
\begin{equation*}
    \ldots \leq
    \gamma_{\pi \otimes \chi_{8d}, -2} \leq
   \gamma_{\pi \otimes \chi_{8d}, -1} < 0 \leq  \gamma_{\pi \otimes \chi_{8d}, 1} \leq  \gamma_{\pi \otimes \chi_{8d}, 2} \leq
   \ldots.
\end{equation*}

   Let $X$ be a large real number and we normalize the zeros by defining
\begin{align*}
    \tilde{\gamma}_{\pi \otimes \chi_{8d}, j}= \frac{M\gamma_{\pi \otimes \chi_{8d}, j}}{2 \pi} \log X.
\end{align*}
 We define for a fixed even Schwartz class function $\phi$,
\begin{equation*}
S(\chi_{8d}; \pi, X, \phi)=\sum_{j} \phi(\tilde{\gamma}_{\pi \otimes \chi_{8d}, j}).
\end{equation*}

  We shall regard the function $\phi$ in the above expression as a test function and we also fix a non-zero, non-negative function $w$
compactly supported on $\mr^+$, which we shall regard as a weight function.  We then define the one-level density of the family $\mathcal F $
with respect to $w$ by
\begin{align}
\label{D}
  D(X;\phi, w, \mathcal F) :=\frac 1{W(X)}\sumstar_{(d,2)=1}  w\left( \frac {d}X \right)  S(\chi_{8d}; \pi, X, \phi),
\end{align}
    where we use $\sum^*$ to denote a sum over square-free integers throughout the paper and $W(X)$ is the total weight given by
\begin{align*}
  W(X)=\sumstar_{(d,2)=1}  w\left( \frac {d}X \right).
\end{align*}

   We evaluate the one-level density $D(X;\phi, w, \mathcal F)$ asymptotically in this paper and our result is as follows.
\begin{theorem}
\label{quadraticmainthm}
Suppose that GRH and the Ramanujan-Petersson conjecture are true. Let $M \geq 2$ and let $\pi$ be a fixed self-contragredient representation
of $\text{GL}_{M}$ over $\mq$ such that $\delta(\pi)=-1$.  Let $w$ be a non-negative function that is not identically zero and compactly supported on $\mr^+$
and let $\phi(x)$ be an even
Schwartz function whose Fourier transform $\hat{\phi}(u)$ has
compact support in $(-2/M, 2/M)$.  Then we have
\begin{align}
\label{theo1eq}
 \lim_{X \rightarrow +\infty}D(X;\phi, w, \mathcal F)
 = \int\limits_{\rear} \phi(x)  W_{USp}(x) \dif x, \quad \mbox{where} \quad  W_{USp}(x) =1-\frac{\sin(2\pi x)}{2\pi x}. 
\end{align}
\end{theorem}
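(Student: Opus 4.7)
The plan is to start from the explicit formula for $L(s,\pi\otimes\chi_{8d})$, which expresses
\begin{align*}
S(\chi_{8d};\pi,X,\phi) &= \hat\phi(0)\,\frac{\log Q_{\pi\otimes\chi_{8d}}}{M\log X} + (\text{gamma factor contribution}) \\
&\quad - \frac{2}{M\log X}\sum_{k\geq 1}\sum_p \frac{b_\pi(p^k)\,\chi_{8d}(p)^k\,\log p}{p^{k/2}}\,\hat\phi\!\left(\frac{k\log p}{M\log X}\right),
\end{align*}
where $b_\pi(p^k)=\sum_{j=1}^M \alpha_\pi(p,j)^k$. I would substitute this into $D(X;\phi,w,\mathcal F)$ and analyze the conductor term, the gamma factor contribution, and the $k=1$, $k=2$, and $k\geq 3$ prime-power subsums separately.

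The conductor term uses \eqref{Qbound} (together with a complementary local-factor lower bound $Q_{\pi\otimes\chi_{8d}}\gg d^M$) to reduce $\log Q_{\pi\otimes\chi_{8d}}/(M\log X)$ to $1+o(1)$ on averaging over $d\asymp X$; this produces $\hat\phi(0) = \int_{\rear}\phi(x)\,\dif x$, which is the ``$1$'' part of $W_{USp}$. The gamma factor contribution is $O(1/\log X)$ by Stirling. The $k\geq 3$ tail is handled by \eqref{Ramanujanconj}, which gives $|b_\pi(p^k)|\leq M$ and renders that sub-sum absolutely convergent uniformly in $d$, so it contributes $O(1/\log X)$.

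The $k=1$ contribution is the technical heart. Applying Poisson summation to the character sum $\sumstar_{(d,2)=1} w(d/X)\chi_{8d}(p)$, in the framework of \cites{sound1,G&Zhao2}, the zeroth dual frequency vanishes (no odd prime is a square) and the remaining frequencies produce square-root cancellation of shape $\ll X^{1/2+\epsilon}$. The support hypothesis $\mathrm{supp}(\hat\phi)\subset(-2/M,2/M)$ restricts the prime sum to $p < X^2$, and combined with the trivial bound on $b_\pi(p)$ from \eqref{Ramanujanconj} and the normalization by $W(X)\asymp X$, one checks that the total $k=1$ contribution to $D(X;\phi,w,\mathcal F)$ is $o(1)$ as $X\to\infty$.

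For $k=2$, $\chi_{8d}(p)^2 = \mathbf 1_{(p,2d)=1}$, so averaging over $d$ replaces the character by $1$ up to negligible loss, leaving
\begin{align*}
-\frac{2}{M\log X}\sum_p \frac{b_\pi(p^2)\,\log p}{p}\,\hat\phi\!\left(\frac{2\log p}{M\log X}\right).
\end{align*}
Decomposing $b_\pi(p^2)=a_{\vee^2}(p)-a_{\wedge^2}(p)$ via the Satake parameters, invoking \eqref{RankinSelbergdecomp}, and using that the hypothesis $\delta(\pi)=-1$ places the simple pole of $L(s,\pi\otimes\pi)$ on the symmetric square $L(s,\vee^2)$, Rankin-Selberg theory under GRH yields $\sum_{p\leq Y} b_\pi(p^2)\log p/p = \log Y + O(1)$. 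Partial summation and the substitution $u=2\log p/(M\log X)$ then turn the displayed sum into $-\int_0^{2/M}\hat\phi(u)\,\dif u = -\tfrac12\int_{-1}^1\hat\phi(u)\,\dif u$ (using evenness of $\hat\phi$ and $\mathrm{supp}(\hat\phi)\subseteq(-2/M,2/M)\subseteq(-1,1)$ since $M\geq 2$), which by Plancherel equals $\int_{\rear}\phi(x)\cdot\bigl(-\sin(2\pi x)/(2\pi x)\bigr)\,\dif x$, completing the match with $W_{USp}$. The main obstacle is the $k=1$ step: extracting genuine square-root cancellation in the quadratic character sum uniformly for $p$ up to $X^2$ is precisely where the support restriction $2/M$ is used most sharply, and it is this constraint that prevents enlarging the admissible support for $\hat\phi$.
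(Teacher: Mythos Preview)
Your decomposition into conductor, gamma, $k\geq 3$, $k=2$, and $k=1$ pieces is exactly the paper's structure, and your treatment of the $k=2$ term (via the factorization \eqref{RankinSelbergdecomp} and $\delta(\pi)=-1$) matches the paper's Lemma~\ref{lem2.1}. The gap is in the $k=1$ step, and it is not a technicality: as written, your argument only recovers Rubinstein's support $(-1/M,1/M)$, not the doubled range $(-2/M,2/M)$ claimed in the theorem.

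Concretely, suppose $\sumstar_{(d,2)=1} w(d/X)\chi_{8d}(p)\ll X^{1/2+\epsilon}$ as you assert. Inserting this and the ``trivial bound on $b_\pi(p)$'' into the $k=1$ sum gives
\[
\frac{1}{W(X)\log X}\sum_{p<X^{2}}\frac{|a_\pi(p)|\log p}{\sqrt p}\cdot X^{1/2+\epsilon}
\ \ll\ \frac{X^{1/2+\epsilon}}{X\log X}\sum_{p<X^{2}}\frac{\log p}{\sqrt p}
\ \asymp\ \frac{X^{1/2+\epsilon}\cdot X}{X\log X}=\frac{X^{1/2+\epsilon}}{\log X},
\]
which is not $o(1)$. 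Bounding the $d$-sum for each $p$ separately and then summing trivially over $p$ cannot reach primes as large as $X^2$; this is precisely why Rubinstein's result stops at $1/M$.

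What the paper does to double the support is to \emph{swap the order of analysis after Poisson}: once the $d$-sum has been dualized into a sum over frequencies $m$, one fixes $m$ and exploits cancellation in the remaining sum over $p$ by invoking GRH for the twisted automorphic $L$-function $L(s,\pi\otimes\chi_{8m})$ (Lemma~\ref{lem3.2} and \eqref{3.1}), obtaining $\sum_{p\leq V}a_\pi(p)\chi_{8m}(p)\log p\ll V^{1/2}\log^2(mV)$. Partial summation against $p^{-1}\widetilde W_{\af}(mX/\alpha^2 p)$ then costs only logarithmic factors in the $p$-sum, and the decay of $\widetilde W_{\af}$ controls the $m$-sum, yielding the bound $Z\sqrt{Y}\log^7 X$ of \eqref{error4}. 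Without this second application of GRH on the prime side, the argument cannot close for $\hat\phi$ supported beyond $(-1/M,1/M)$. Your outline should replace ``trivial bound on $b_\pi(p)$'' by this GRH-based estimate on the $p$-sum for each dual frequency.
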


  Note that the kernel $W_{USp}$ in the integral of \eqref{theo1eq} is the same one appearing on the random matrix theory side, when studying the eigenvalues of unitary symplectic matrices. This implies that the family $\mathcal F$ of quadratic twists of $L$-functions is a symplectic family and hence verifies the density conjecture of N. Katz and P. Sarnak for this family when the support of the Fourier transform of the test function $\phi$ is contained in the interval $(-2/M, 2/M)$. Our result improves upon a result of Rubinstein given in \cite[Theorem 3.2]{Ru} by doubling the size of the allowable support of $\hat \phi$ in the case of the one-level density. \newline

  Theorem \ref{quadraticmainthm} can be regarded as a generalization of the result of \"{O}zluk and Snyder \cite{O&S} on the one-level density of the family of quadratic Dirichlet $L$-functions and Theorem 1.1 of \cite{G&Zhao2} on the one-level density of the family of quadratic
twists of modular $L$-functions. A key step in the proofs of all cases is to estimate certain character sums. The treatments in \cite{O&S} and \cite{G&Zhao2} use Poisson summations to convert the character sums into corresponding dual sums (see the work of K. Soundararajan \cite{sound1} for a systematic development of this Poisson summation formula). Our proof of Theorem \ref{quadraticmainthm} is different as we only makes use of functional equations of the $L$-functions involved, although we shall call this approach Poisson summation as well. In fact, our approach here is motivated by a Poisson summation formula over number fields established by L. Goldmakher and B. Louvel in \cite[Lemma 3.2]{G&L}. The equivalence of this version of Poisson summation formula and that developed by Soundararajan \cite{sound1} over primitive Hecke character of the Gaussian filed $\mq(i)$ has already been pointed out in \cite[Section 2.6]{Gao2}. More generally, one may evaluate the expression for $D(X;\phi, w, \mathcal F)$ in \eqref{D} by first applying the explicit formula given in Lemma \ref{lem2.4} to convert the sum over zeros of $L$-function into a sum over prime powers. The resulting double sum can then be viewed as a double Dirichlet series and the use of functional equations can be thought of as an effort to understand the analytical behavior of the double Dirichlet series. One may find in \cite{DGH} a build-up of the theory of double Dirichlet series. \newline

We remark here that Theorem~\ref{quadraticmainthm} may be extended to the case in which $\delta(\pi)=1$.  This requires generalizing the Possion summation formula in Lemma \ref{Poissonsum} to smoothed character sums over arithmetic progressions.  As noted in \cite[pp. 173--176]{Ru}, the sign of the functional equation of $L(s,\pi \otimes \chi_d)$ depends on $d$ lying in certain fixed arithmetic progressions.

\section{Preliminaries}
\label{sec 2}

\subsection{The Explicit Formula}
\label{section: Explicit Formula}

Our approach in this paper relies on the following explicit
formula given in \cite[(3.65)]{Ru}, which essentially converts a sum over zeros of an
$L$-function to a sum over primes.
\begin{lemma}
\label{lem2.4}
   Let $\phi(x)$ be an even Schwartz function whose Fourier transform
   $\hat{\phi}(u)$ has compact support. For any odd, positive and square-free integer $d$, we have
\begin{align}
\label{Schiphi}
S(\chi_{8d}; \pi, X, \phi) =\int\limits^{\infty}_{-\infty}  \phi(t) \dif t-\frac {2}{M\log X}\sum^{\infty}_{m=1} \left(\frac
{\Lambda(m)a_{\pi}(m)}{\sqrt{m}} \chi_{8d}(m) \hat{\phi}\left( \frac { \log m}{M\log X} \right) \right)+O\left(\frac{1}{\log
X}\right),
\end{align}
   where
\begin{equation*}
   a_{\pi}(p^k)=\sum^M_{j=1}\alpha^k_{\pi}(p, j).
\end{equation*}
\end{lemma}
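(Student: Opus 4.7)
The plan is to prove \eqref{Schiphi} by a classical contour-integration argument: express the sum over zeros as a contour integral involving $\Lambda'/\Lambda$ on the critical strip, fold the two vertical contours into one via the functional equation, and isolate the finite part of $\Lambda'/\Lambda$ as its Dirichlet series over prime powers.

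Set $T = (M\log X)/(2\pi)$ and introduce the entire function $F(s) = \phi\lp T(s-\tfrac12)/i\rp$, so that $F(\tfrac12+i\gamma) = \phi(T\gamma) = \phi(\tilde\gamma)$ at each nontrivial zero; evenness of $\phi$ gives $F(1-s) = F(s)$, and compact support of $\hat\phi$ gives rapid decay of $F$ in any fixed vertical strip via the Paley--Wiener theorem. For a fixed $c>1$, apply the residue theorem to $(\Lambda'/\Lambda)(s,\pi\otimes\chi_{8d})F(s)$ on a tall rectangle with vertical sides $\Re(s)=c$ and $\Re(s)=1-c$. Since $\pi$ is cuspidal, $\Lambda(s,\pi\otimes\chi_{8d})$ is entire and its only zeros in the strip are the nontrivial zeros of $L(s,\pi\otimes\chi_{8d})$; the horizontal contributions vanish as the height tends to infinity, giving
\[
S(\chi_{8d};\pi,X,\phi) = \frac{1}{2\pi i}\lp\int_{(c)}-\int_{(1-c)}\rp\frac{\Lambda'}{\Lambda}(s,\pi\otimes\chi_{8d})F(s)\,\dif s.
\]
Logarithmic differentiation of $\Lambda(s,\pi\otimes\chi_{8d}) = Q_{\pi\otimes\chi_{8d}}^{1/2-s}\Lambda(1-s,\pi\otimes\chi_{8d})$ yields the global identity $(\Lambda'/\Lambda)(s) + (\Lambda'/\Lambda)(1-s) = -\log Q_{\pi\otimes\chi_{8d}}$. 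Substituting this together with the change of variable $s\mapsto 1-s$ in the $(1-c)$-integral (using $F(1-s) = F(s)$) folds the contour onto $(c)$ and produces
\[
S(\chi_{8d};\pi,X,\phi) = \frac{1}{\pi i}\int_{(c)}\frac{\Lambda'}{\Lambda}(s,\pi\otimes\chi_{8d})F(s)\,\dif s + \frac{\log Q_{\pi\otimes\chi_{8d}}}{M\log X}\int_\mr \phi(t)\,\dif t,
\]
after identifying $\frac{1}{2\pi i}\int_{(1/2)}F(s)\,\dif s = \frac{1}{2\pi T}\int_\mr\phi = \frac{1}{M\log X}\int_\mr\phi$.

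Now split $\Lambda'/\Lambda = L'/L + \gamma_\infty'/\gamma_\infty$ with $\gamma_\infty(s) = \pi^{-Ms/2}\prod_{j=1}^M \Gamma((s+\mu_{\pi\otimes\chi_{8d}}(j))/2)$. For the finite piece, insert the absolutely convergent Dirichlet series $-L'/L = \sum_m \Lambda(m)a_\pi(m)\chi_{8d}(m)m^{-s}$, interchange summation and integration, shift each $m$-integral to $\Re(s)=1/2$ (legitimate since $F$ is entire with rapid decay), and substitute $u = Tt$ on the critical line to identify
\[
\frac{1}{\pi i}\int_{(c)}m^{-s}F(s)\,\dif s = \frac{2}{M\log X\cdot\sqrt m}\hat\phi\lp\frac{\log m}{M\log X}\rp,
\]
so that $\frac{1}{\pi i}\int_{(c)}(L'/L)F\,\dif s$ reassembles into the prime-power sum appearing in \eqref{Schiphi} with precisely the coefficient $-2/(M\log X)$. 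For the archimedean piece, shift $\int_{(c)}(\gamma_\infty'/\gamma_\infty)F\,\dif s$ to $\Re(s)=1/2$ (no poles crossed since $\Re\mu_{\pi\otimes\chi_{8d}}(j)\geq 0$), substitute $u = Tt$, and use Stirling's estimate $(\Gamma'/\Gamma)(z)\ll\log(2+|z|)$ together with the Schwartz decay of $\phi$ to bound this contribution by $O(1/\log X)$.

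Finally, from \eqref{Qbound} and the matching lower bound $Q_{\pi\otimes\chi_{8d}} \gg d^M$ (which follows from the explicit factorization of the conductor as $Q_{\pi\otimes\chi_{8d}} = Q_\pi(8d)^M$ up to bounded ramification contributions), $\log Q_{\pi\otimes\chi_{8d}} = M\log d + O(1)$. For the range $d\asymp X$ in which Lemma \ref{lem2.4} is effectively applied (via the weight $w(d/X)$), $\log d = \log X + O(1)$, so $\log Q_{\pi\otimes\chi_{8d}}/(M\log X) = 1 + O(1/\log X)$, converting the conductor contribution into the main term $\int_\mr \phi(t)\,\dif t$ plus an acceptable $O(1/\log X)$ error. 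The main technical point is the uniformity in the Stirling estimate: the archimedean logarithmic derivative at height $u/T$ grows only like $\log(2+|u|/T)$, so the rapid decay of $\phi$ comfortably controls the resulting integral before the $1/(M\log X)$ prefactor is applied.
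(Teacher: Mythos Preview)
Your argument is the standard contour-integration derivation of the Weil explicit formula and is correct. The paper itself does not prove Lemma~\ref{lem2.4}: it simply quotes the formula from Rubinstein \cite[(3.65)]{Ru}. So there is nothing to compare on the level of method; you have supplied a self-contained proof where the paper gives only a citation.

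One point worth flagging. As stated, the lemma asserts the error $O(1/\log X)$ for \emph{any} odd square-free $d$, but your derivation (correctly) shows that the conductor contributes $\dfrac{\log Q_{\pi\otimes\chi_{8d}}}{M\log X}\displaystyle\int_{\mr}\phi$, and this equals $\displaystyle\int_{\mr}\phi + O(1/\log X)$ only once one knows $\log Q_{\pi\otimes\chi_{8d}} = M\log X + O(1)$, i.e.\ only for $d\asymp X$. You noticed this and restricted to the support of $w(d/X)$, which is exactly how the lemma is used downstream; but strictly speaking the lemma as written is slightly imprecise for $d$ far from $X$, and a cleaner statement would carry an error $O\big((1+|\log(d/X)|)/\log X\big)$. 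This is a wrinkle in the paper's formulation rather than in your proof.

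Two minor technical checks that you handled correctly and that a reader might want made explicit: the archimedean parameters $\mu_{\pi\otimes\chi_{8d}}(j)$ depend on $d$ only through the parity of $\chi_{8d}$, hence are uniformly bounded in $d$, which is what makes the Stirling bound on the gamma contribution uniform; and the lower bound $Q_{\pi\otimes\chi_{8d}}\gg_\pi d^M$ holds because the primes where the local conductor of the twist can drop are confined to the finitely many ramified primes of $\pi$, each contributing a bounded exponent.
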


   Note that the Ramanujan-Petersson conjecture \eqref{Ramanujanconj} implies that $|a_{\pi}(p^k)| \leq M$ for all $p$ and $k$. It follows that the terms corresponding to $m=p^k$ with $k \geq 3$ in \eqref{Schiphi} contributes $O(1/(\log X))$.
   Moreover, note that by \cite[Lemma 3.9]{Levinson},
\begin{equation*}
  \sum_{p | 8d}\frac {\log p}{p} \ll \log \log (3d).
\end{equation*}

    We then deduce that the terms corresponding to $m=p^2$ in \eqref{Schiphi}  can be written as
\begin{equation*}
-\frac {2}{M\log X}\sum_{p} \frac {a_{\pi}(p^2)\log p }{p}  \hat{\phi}\left( \frac { 2\log p}{M\log X} \right) +
O \left( \frac {\log \log 3d}{\log  X} \right).
\end{equation*}

    To estimate the above sum, we note the Euler products for $L(s, \pi \otimes \pi)$ and $L(s, \vee^2)$  can be found in \cite[(2.18)]{R&S} and on \cite[p. 167]{BG}, respectively. Thus, we see by \eqref{RankinSelbergdecomp} that if we write
\begin{align}
\label{LEulerprod}
  L(s, \vee^2) = \prod_p L_p(s, \vee^2), \quad \mbox{and} \quad L(s, \wedge^2)= \prod_p L_p(s, \wedge^2),
\end{align}
then we have for unramified $p$,
\begin{align*}
  L_p(s, \vee^2) =& \prod_{1 \leq i \leq j \leq M}(1-\alpha_{\pi}(p, i)\alpha_{\pi}(p, j)p^{-s})^{-1}, \quad \mbox{and}  \\
  L_p(s, \wedge^2)=& \prod_{1 \leq i < j \leq M}(1-\alpha_{\pi}(p, i)\alpha_{\pi}(p, j)p^{-s})^{-1}.
\end{align*}

   Upon taking the logarithmic derivatives on both sides of the expressions in \eqref{LEulerprod} with the above in mind, we deduce from  \cite[Theorem 5.15]{iwakow} that under GRH,
\begin{align*}
  \sum_{p \leq x} \sum_{1 \leq i \leq j \leq M}\alpha_{\pi}(p, i)\alpha_{\pi}(p, j)\log p=& \frac {1-\delta(\pi)}{2}x+O(x^{1/2}\log^{2} x), \quad \mbox{and} \\
  \sum_{p \leq x} \sum_{1 \leq i < j \leq M}\alpha_{\pi}(p, i)\alpha_{\pi}(p, j)\log p=& \frac {\delta(\pi) + 1}{2}x+O(x^{1/2}\log^{2} x),
\end{align*}
  where the implied constants depend on $\pi$ only. \newline

  Taking difference of the above expressions then implies that under GRH,
\begin{align}
\label{sumpest}
   \sum_{p \leq x}a_{\pi}(p^2)\log p =\sum_{p \leq x}\sum_{1  \leq j \leq M}\alpha^2_{\pi}(p, j)\log p =& -\delta(\pi)x+O(x^{1/2}\log^{2} x).
\end{align}

   We now apply \eqref{sumpest} (by noting that $\delta(\pi)=-1$) and partial summation to get
\begin{align*}
 -\frac {2}{M\log X} & \sum_{p} \frac {a_{\pi}(p^2)\log p }{p}  \hat{\phi}\left( \frac { 2\log p}{M\log X} \right) \\
=& -\frac {2}{M\log X}
\int\limits^{\infty}_{1}\hat{\phi}\left( \frac { 2\log t}{M\log X} \right) \frac {\dif t}{t}-\frac {2}{M\log X}
\int\limits^{\infty}_{1}\hat{\phi}\left( \frac { 2\log t}{M\log X} \right) \frac {\dif O(t^{1/2}\log^{2} t)}{t} \\
=&-\frac 12\int\limits^{\infty}_{-\infty}\hat{\phi}(t) \dif t+O \Big( \frac 1{\log X} \Big).
\end{align*}

  Our discussions above allow us to simplify the expression for $S(\chi_{8d}; \pi, X, \phi)$ in \eqref{Schiphi} in the following way.
\begin{lemma}
\label{lem2.1}
   Let $\phi(x)$ be an even Schwartz function whose Fourier transform $\hat{\phi}(u)$ has compact support. For any odd, positive and square-free integer $d$, we have
\begin{align}
\label{Schiphisimplified}
  S(\chi_{8d}; \pi, X, \phi) = \int\limits^{\infty}_{-\infty}\phi(t) \dif t-\frac 1{2}
   \int\limits^{\infty}_{-\infty}\hat{\phi}(u) \dif u-S_1(\chi_{8d}; \pi, X, \hat{\phi})+O \left( \frac {\log \log 3X}{\log
   X} \right),
\end{align}
   where the implicit constant in the $O$-term depends on $\phi$ alone and
\begin{align*}
S_1(\chi_{8d}; \pi, X, \phi) =\frac {2}{M\log X}\sum_{p} \frac { a_{\pi}(p)\log p}{\sqrt{p}} \chi_{8d}(p) \hat{\phi}\left( \frac { \log
p}{M\log X} \right) .
\end{align*}
\end{lemma}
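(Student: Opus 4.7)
The plan is to derive \eqref{Schiphisimplified} directly from the explicit formula \eqref{Schiphi} of Lemma \ref{lem2.4} by partitioning the sum over $m=p^k$ in \eqref{Schiphi} according to whether $k=1$, $k=2$, or $k\geq 3$, and then treating each regime separately.

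For $k\geq 3$, I would invoke the Ramanujan-Petersson bound $|a_\pi(p^k)|\leq M$ together with the compact support of $\hat\phi$ to observe that this tail is bounded by a constant times $(\log X)^{-1}\sum_p\sum_{k\geq 3}(\log p)/p^{k/2}$, an absolutely convergent double sum, giving contribution $O(1/\log X)$. For $k=1$, the corresponding piece is by definition exactly $-S_1(\chi_{8d};\pi,X,\hat\phi)$.

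The substantive step is the $k=2$ contribution. Since $\chi_{8d}(p)^2=1$ for $p\nmid 8d$ and $\chi_{8d}(p)=0$ otherwise, I can drop the character from the sum at the cost of removing the primes dividing $8d$, whose total weight is $O((\log\log 3d)/\log X)$ by the cited bound $\sum_{p\mid 8d}(\log p)/p\ll\log\log(3d)$. It then remains to evaluate $-\tfrac{2}{M\log X}\sum_p\frac{a_\pi(p^2)\log p}{p}\hat\phi\!\left(\tfrac{2\log p}{M\log X}\right)$, for which I would apply Abel partial summation together with \eqref{sumpest}; crucially, the hypothesis $\delta(\pi)=-1$ gives $\sum_{p\leq x}a_\pi(p^2)\log p=x+O(x^{1/2}\log^2 x)$. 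The main term becomes $-\tfrac{2}{M\log X}\int_1^\infty\hat\phi\!\left(\tfrac{2\log t}{M\log X}\right)\tfrac{\dif t}{t}$, and the substitution $u=2\log t/(M\log X)$ combined with the evenness of $\hat\phi$ reduces this to $-\tfrac12\int_{-\infty}^\infty\hat\phi(u)\,\dif u$, while the $O(x^{1/2}\log^2 x)$ remainder yields an additional $O(1/\log X)$.

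Assembling the three pieces gives \eqref{Schiphisimplified}. The main technical obstacle is the $k=2$ analysis: one must combine the precise shape of \eqref{sumpest} (which relies on the Rankin-Selberg decomposition \eqref{RankinSelbergdecomp} and the sign $\delta(\pi)=-1$) with careful partial summation to extract the clean constant $-\tfrac12\int\hat\phi$ and to absorb all secondary errors into $O((\log\log 3X)/\log X)$.
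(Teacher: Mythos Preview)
Your proposal is correct and follows essentially the same approach as the paper: partition the prime-power sum in the explicit formula \eqref{Schiphi} according to $k=1$, $k=2$, and $k\geq 3$, dispose of $k\geq 3$ via Ramanujan--Petersson, identify $k=1$ with $-S_1$, and for $k=2$ remove the character using $\sum_{p\mid 8d}(\log p)/p\ll\log\log(3d)$ before applying partial summation with \eqref{sumpest} (using $\delta(\pi)=-1$) to extract $-\tfrac12\int\hat\phi$. The only cosmetic point is that, as in the paper, the passage from $\log\log 3d$ to $\log\log 3X$ in the final error term tacitly uses $d\ll X$, which holds in the application since $w$ has compact support.
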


\subsection{Poisson summation}
\label{sec Poisson}

   In this section we develop a Poisson summation formula for a primitive quadratic Dirichlet character $\chi=\leg {\cdot}{q}$, where
$q$ is an odd prime. This is already contained in \cite[Lemma 3.2]{G&L} and we include the details here for the completeness.  We let $\af=0$ or $1$ be given by $\chi(-1)=(-1)^{\af}$ and define
\begin{align}
\label{Lambdasdef}
  \Lambda(s, \chi) = \Big( \frac {q}{\pi} \Big)^{s/2} \Gamma \Big( \frac {s+\af}{2} \Big)L(s, \chi).
\end{align}

    Then we have the following functional equation (see \cite[\S 9, (14)]{Da})
\begin{align}
\label{fcneqn}
  \Lambda(s, \chi) = \frac {\tau(\chi)}{i^{\af}\sqrt{q}}\Lambda(1-s, \overline{\chi}),
\end{align}
   where $\tau(\chi)$ is the Gauss sum associated to $\chi$ defined by
\begin{equation*}
  \tau(\chi) =  \sum_{1 \leq x \leq q}\chi(x) e \left( \frac{x}{q} \right), \quad \mbox{where} \quad e(z) = \exp (2 \pi i z).
\end{equation*}
   Note that $\overline{\chi}=\chi$ and that $\tau(\chi)=i^{\af}\sqrt{q}$ (see \cite[\S 2]{Da}), so that the functional equation given
in \eqref{fcneqn} becomes
\begin{align}
\label{fcneqnquad}
  \Lambda(s, \chi) = \Lambda(1-s, \chi).
\end{align}

  We recall that the Mellin transform $\mathcal{M}f$ for any function $f$ is defined to be
\begin{align*}
     \mathcal{M}f(s) =\int\limits^{\infty}_0f(t)t^s\frac {\dif t}{t}.
\end{align*}

   Now, for any smooth function $W:\mr^{+} \rightarrow \mr$ of compact support, we further define for $c>0$, $\fb=0$ or $1$,
\begin{align}
\label{WMell}
\begin{split}
 &\widetilde{W}_{\fb}(x)=  \frac 1{2\pi i}\int\limits\limits_{(c)}\mathcal{M}W(1-u)x^{-u}\pi^{-(2u-1)/2}\Gamma \Big( \frac{u+\fb}{2} \Big) \Gamma^{-1}\Big( \frac {1-u+\fb}{2} \Big) \dif u.
\end{split}
\end{align}
It goes without saying that $\pi$ appearing in this section denotes the number $3.14\ldots$ and not the self-contragredient representation under our consideration. \newline

   We note that integration by parts implies that for any integer $E>0$ and any $s \in \mc$,
\begin{align*}
\begin{split}
\mathcal{M}W(s) \ll (1+|s|)^{-E}.
\end{split}
\end{align*}
   It follows from this and Stirling's formula that the integration in \eqref{WMell} is convergent. \newline

  Moreover, by pairing together the $u$ and $\overline{u}$ values of the integrand in \eqref{WMell}, we see that $\widetilde{W}_{\fb}(x)$ is
real-valued for $x \in \rear^+$. Upon shifting the contour of integration in  \eqref{WMell} to the left and right according to whether $x \geq 1$ or
not, and observing that $\Gamma(s)$ has a pole at $s=0$, we see that for any integer $j \geq 0$ and any real $A>0$,
\begin{align*}
\begin{split}
 &\widetilde{W}^{(j)}_{\fb}(x) \ll_{j,A} \min (1, x^{-A}).
\end{split}
\end{align*}

   Our next result gives a Poisson summation formula for smoothed character sums over integers in $\mz$.
\begin{lemma}
\label{Poissonsum} Let $q$ be an odd prime and let $\chi=\leg {\cdot}{q}$. For any smooth function $W:\mr^{+} \rightarrow \mr$ of compact
support,  we have for $X>0$,
\begin{align}
\label{PoissonsumQw}
   \sum_{n}\chi(n)W\left(\frac {n}{X}\right) = \frac {X}{\sqrt{q}} \sum^{\infty}_{m=1}\chi(m)\widetilde{W}_{\af} \Big( \frac {mX}{q} \Big).
\end{align}
\end{lemma}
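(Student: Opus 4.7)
The strategy is to apply Mellin inversion, invoke the functional equation \eqref{fcneqnquad} on the dual side, and recognize the resulting contour integral as the defining integral \eqref{WMell} of $\widetilde{W}_\af$. I would begin, for any $c_0 > 1$, by inserting the Mellin inversion formula
\[
W(n/X) = \frac{1}{2\pi i}\int\limits_{(c_0)}\mathcal{M}W(s)\,(n/X)^{-s}\,ds
\]
into the left-hand side of \eqref{PoissonsumQw}. Interchanging the absolutely convergent sum over $n$ with the integral, which is legitimate thanks to the rapid decay $\mathcal{M}W(s) \ll (1+|s|)^{-E}$ combined with absolute convergence of the Dirichlet series for $L(s,\chi)$ when $\Re s > 1$, yields
\[
\sum_n \chi(n) W(n/X) = \frac{1}{2\pi i}\int\limits_{(c_0)}\mathcal{M}W(s)\,X^s L(s,\chi)\,ds.
\]

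Since $\chi$ is a nontrivial primitive character, $L(s,\chi)$ is entire, so I can shift the contour to a line $\Re s = -c_1$ with $c_1 > 0$ without encountering any pole; the decay of $\mathcal{M}W$ together with polynomial growth of $L(s,\chi)$ in vertical strips makes the horizontal contributions vanish at infinity. On the shifted line I would apply the functional equation \eqref{fcneqnquad} together with \eqref{Lambdasdef} to write
\[
L(s,\chi) = q^{1/2-s}\,\pi^{s-1/2}\,\Gamma\!\left(\tfrac{1-s+\af}{2}\right)\Gamma^{-1}\!\left(\tfrac{s+\af}{2}\right) L(1-s,\chi).
\]
Since $\Re(1-s) = 1+c_1 > 1$, the series $L(1-s,\chi) = \sum_{m\geq 1}\chi(m) m^{s-1}$ converges absolutely, and I can swap this sum with the integral, pulling out a factor $\chi(m)/m$ for each $m \geq 1$.

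The final step is the substitution $u = 1-s$, under which $\Re u = 1+c_1 > 0$, $ds = -du$, and the contour orientation is reversed. After extracting the factor $(Xm/q)\cdot q^{1/2}$ produced by writing $(Xm/q)^{s}q^{1/2} = (Xm/q)(Xm/q)^{-u}q^{1/2}$ and rewriting $\pi^{s-1/2} = \pi^{-(2u-1)/2}$, the inner integral turns into
\[
\frac{Xm}{q}\cdot q^{1/2}\cdot\frac{1}{2\pi i}\int\limits_{(1+c_1)}\mathcal{M}W(1-u)\left(\frac{Xm}{q}\right)^{-u}\pi^{-(2u-1)/2}\Gamma\!\left(\tfrac{u+\af}{2}\right)\Gamma^{-1}\!\left(\tfrac{1-u+\af}{2}\right)du,
\]
which is exactly $(Xm/q)\sqrt{q}\,\widetilde{W}_\af(mX/q)$ by \eqref{WMell}. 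Collecting the prefactors $(1/m)\cdot(Xm/q)\cdot\sqrt{q} = X/\sqrt{q}$ then reproduces the right-hand side of \eqref{PoissonsumQw}.

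There is no deep obstacle; the real work is essentially algebraic bookkeeping — tracking the gamma factors, the powers of $q$ and $\pi$, and the contour orientation across one functional equation and one linear substitution, and matching the outcome line-by-line against the definition \eqref{WMell}. The only analytic inputs beyond that are the standard rapid decay of $\mathcal{M}W(s)$ on vertical lines and standard convexity estimates for $L(s,\chi)$, which together justify both the contour shift and the two interchanges of summation and integration.
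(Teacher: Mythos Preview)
Your proposal is correct and follows essentially the same route as the paper: Mellin inversion to express the sum as a contour integral against $L(s,\chi)$, shift left of the critical strip, apply the functional equation \eqref{fcneqnquad}, substitute $u=1-s$, expand the Dirichlet series, and identify the resulting integral with \eqref{WMell}. The only cosmetic difference is that the paper expands the Dirichlet series after the change of variable while you do so before; the bookkeeping of the gamma factors and the powers of $q,\pi$ is identical.
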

\begin{proof}
  We apply the inverse Mellin transform to write $W(t)$ as, for some $c_u>1$,
\begin{align*}
    W \left(t \right)=\frac 1{2\pi i}\int\limits\limits_{(c_u)}\mathcal{M}W(u)t^{-u} \dif u.
\end{align*}

    It follows that
\begin{align}
\label{chisum}
   \sum_{n}\chi(n)W\left(\frac {n}{X}\right)=\frac 1{2\pi i}\int\limits\limits_{(c_u)}\mathcal{M}W(u)X^{u}\Big (\sum_{n}\frac {\chi(n)}{n^{u}}
\Big ) \dif u =\frac 1{2\pi i}\int\limits\limits_{(c_u)}\mathcal{M}W(u)X^{u}L(u, \chi) \dif u.
\end{align}

    Applying the definition of $\Lambda(s, \chi)$ in \eqref{Lambdasdef} to \eqref{chisum} yields
\begin{align*}
  \sum_{n}\chi(n)W\left(\frac {n}{X}\right) =\frac 1{2\pi i}\int\limits\limits_{(c_u)}\widehat{W}(u)X^{u} \Big( \frac{q}{\pi} \Big)^{-u/2}\Gamma^{-1} \Big( \frac {u+\af}{2} \Big) \Lambda(u, \chi) \dif u.
\end{align*}

     We move the line of integration above to $\Re(u)=c'_u$ for some $c'_u<0$, encountering no poles in the process. Thus, we
obtain that
\begin{align*}
  \sum_{n}\chi(n)W\left(\frac {n}{X}\right) =\frac 1{2\pi i}\int\limits\limits_{(c'_u)}\mathcal{M}W(u)X^{u} \Big( \frac{q}{\pi} \Big)^{-u/2} \Gamma^{-1}\Big(\frac {u+\af}{2} \Big) \Lambda(u, \chi) \dif u.
\end{align*}

   We apply the functional equation given in \eqref{fcneqnquad} to recast the expression above as
\begin{align*}
\begin{split}
  \sum_{n}\chi(n)W\left(\frac {n}{X}\right) =&  \frac 1{2\pi i}\int\limits\limits_{(c'_u)}\mathcal{M}W(u)X^{u} \Big( \frac
{q}{\pi} \Big)^{-u/2}\Gamma^{-1} \Big( \frac {u+\af}{2} \Big) \Lambda(1-u, \chi) \dif u \\
=& \frac 1{2\pi i}\int\limits\limits_{(c'_u)}\mathcal{M}W(u)X^{u} \Big( \frac {q}{\pi} \Big)^{(1-2u)/2}\Gamma^{-1} \Big( \frac {u+\af}{2} \Big) \Gamma \Big( \frac{1-u+\af}{2} \Big) L(1-u, \chi) \dif u.
\end{split}
\end{align*}

  We make a change of variable $u \rightarrow 1-u$ in the last expression above to see that for some $c>1$,
\begin{align*}
\begin{split}
 & \sum_{n}\chi(n)W\left(\frac {n}{X}\right) = \frac 1{2\pi i}\int\limits\limits_{(c)}\mathcal{M}W(1-u)X^{1-u} \Big( \frac
{q}{\pi} \Big)^{(2u-1)/2} \Gamma \Big( \frac {u+\af}{2} \Big) \Gamma^{-1} \Big( \frac {1-u+\af}{2} \Big)
L(u, \chi) \dif u.
\end{split}
\end{align*}

Now substituting in the Dirichlet series for the $L$-function, interchanging the sum and integral leads to \eqref{PoissonsumQw}. This completes the proof.
\end{proof}

\section{Proof of Theorem \ref{quadraticmainthm}}

\subsection{Setup}
\label{sec 3.0}

   Applying the explicit formula \eqref{Schiphisimplified} in \eqref{D} and using the M\"obius function $\mu$ to detect square-free $d$'s, we get
\begin{align}
\label{Dlimit}
    \lim_{X \rightarrow +\infty}D(X;\phi, w, \mathcal F)=\int\limits^{\infty}_{-\infty}\phi(t) \dif t-\frac 1{2}
   \int\limits^{\infty}_{-\infty}\hat{\phi}(u) \dif u-\frac {2}{M}\lim_{X \rightarrow \infty}\frac {  S(X; \pi, \hat{\phi},  W) }{W(X)\log X },
\end{align}
  where
\begin{align}
\label{SX}
    S(X; \pi, \hat{\phi},  W) :=
    \sum_{(d,2)=1}\mu^2(d)\sum_{p}\frac {a_{\pi}(p)\log p}{\sqrt {p}} \left( \frac{8d}{p} \right) \hat{\phi} \left( \frac {\log p}{M\log X}
\right) W \left( \frac {d}{X} \right).
\end{align}

   Note that we have by \cite[(2.2)]{FPS} that as $X \rightarrow \infty$,
 \[ W(X) \sim \frac{4X}{\pi^2}\widehat{W}(1). \]

   Observe also that, $M \geq 2$ implies that the support of $\hat{\phi}$ is contained in $(-2/M, 2/M) \subset (-1,1)$.  Thus we have
\begin{align}
\label{phiexp}
  \int\limits^{\infty}_{-\infty}\phi(t) \dif t-\frac 1{2}
   \int\limits^{\infty}_{-\infty}\hat{\phi}(u) \dif u=\int\limits^{\infty}_{-\infty}\phi(t)\left( 1-\frac {\sin (2 \pi t)}{2 \pi
   t} \right) \dif t.
\end{align}

   We then deduce from \eqref{Dlimit} and \eqref{phiexp} that in order to prove Theorem \ref{quadraticmainthm}, it suffices to show that
\begin{align}
\label{Sorder}
    S(X; \pi, \hat{\phi},  W) =o(X\log X),
\end{align}
as $X \to \infty$. \newline

     To this end, we may assume that the support of  $\hat{\phi}$  is contained in the interval $(-2/M+\varepsilon/M, 2/M-\varepsilon/M)$ for any $\varepsilon>0$. This then implies that we may also restrict the sum over $p$ in \eqref{SX} to $p \leq Y$, where we set $Y=X^{2-\varepsilon}$. Let $Z >1$ be a real parameter to be chosen later and we write $\mu^2(d)=M_Z(d)+R_Z(d)$ where
\begin{equation*}
    M_Z(d)=\sum_{\substack {l^2|d \\ l \leq Z}}\mu(l) \quad \mbox{and} \quad R_Z(d)=\sum_{\substack {l^2|d \\ l >  Z}}\mu(l).
\end{equation*}

Now set
\[ S_M(X; \pi, \hat{\phi}, W) = \sum_{(d,2)=1}M_Z(d)\sum_{p}
    \frac {a_{\pi}(p)\log p}{\sqrt {p}} \left( \frac{8d}{p} \right) \hat{\phi} \left( \frac {\log p}{M\log X} \right) W\left( \frac{d}{X}
\right) \]
    and
\[ S_R(X; \pi, \hat{\phi}, W)
=\sum_{(d,2)=1}R_Z(d)\sum_{p}
    \frac {a_{\pi}(p)\log p}{\sqrt {p}}\left( \frac{8d}{p} \right) \hat{\phi} \left( \frac {\log p}{M\log X}\right) W\left( \frac {d}{X}
\right). \]
Thus $S(X; \pi, \hat{\phi},  W) =S_M(X; \pi, \hat{\phi},W)+S_R(X; \pi, \hat{\phi},W)$.

\subsection{Estimation of $S_R(X; \pi, \hat{\phi},W)$}
\label{sec 3.1}
For any odd, positive integer $q$, we write $q=q_1q^2_2$ with $q_1$ square-free and apply the bound in \eqref{Ramanujanconj} to obtain that for any real $x \geq 1$,
\begin{equation*}
  \sum_{p \leq x} a_{\pi}(p)\chi_{8q} (p)\log p-\sum_{p \leq x} a_{\pi}(p)\chi_{8q_1} (p)\log p \ll \sum_{p | q} |a_{\pi}(p)\chi_{8q}(q)| \log p \ll \log q.
\end{equation*}

 We further apply \cite[Theorem 5.15]{iwakow} and \eqref{Qbound} to bound the sum
 \[ \sum_{p \leq x} a_{\pi}(p)\chi_{8q_1} (p)\log p \]
 under GRH.   The main term in \cite[(5.56)]{iwakow} equals to $0$ in our case since we have $M \geq 2$ here.  We thus arrive at the following.
\begin{lemma}
\label{lem3.2}
Suppose that GRH and the Ramanujan-Petersson conjecture are true. For any odd, positive integer $q$, we have for $x \geq 1$,
\begin{equation*}
  \sum_{p \leq x} a_{\pi}(p)\chi_{8q} (p)\log p
\ll x^{1/2}\log^{2} (qx).
\end{equation*}
\end{lemma}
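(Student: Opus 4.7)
The plan is to reduce to the case of a primitive quadratic character and then invoke a standard GRH-conditional prime number theorem for automorphic $L$-functions, using the fact that for $M \geq 2$ the main term is forced to vanish.

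The first step has already been carried out in the paragraph preceding the lemma: writing $q = q_1 q_2^2$ with $q_1$ squarefree, the characters $\chi_{8q}$ and $\chi_{8q_1}$ agree at every prime $p \nmid q_2$, so the Ramanujan--Petersson bound $|a_\pi(p)| \leq M$ implies
\[
\sum_{p \leq x} a_\pi(p) \chi_{8q}(p)\log p - \sum_{p \leq x} a_\pi(p)\chi_{8q_1}(p)\log p \ll \log q,
\]
which is comfortably absorbed into the target error. It therefore suffices to estimate the sum twisted by the \emph{primitive} quadratic character $\chi_{8q_1}$ of conductor $8q_1$.

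For this I would apply \cite[Theorem 5.15]{iwakow} to the automorphic $L$-function $L(s, \pi \otimes \chi_{8q_1})$. Under GRH this yields, for $x \geq 1$,
\[
\sum_{p \leq x} a_\pi(p)\chi_{8q_1}(p)\log p \;=\; r\,x + O\bigl(x^{1/2}\log^{2}(Q_{\pi\otimes\chi_{8q_1}}\, x)\bigr),
\]
where $r$ is (a constant multiple of) the residue of $-L'/L(s, \pi\otimes\chi_{8q_1})$ at $s = 1$, i.e.\ the order of the pole of $L(s, \pi \otimes \chi_{8q_1})$ at $s=1$. Since $\pi$ is a cuspidal self-contragredient representation of $\text{GL}_M$ with $M \geq 2$ and $\chi_{8q_1}$ is a nontrivial Dirichlet character, the twisted $L$-function is entire, so $r = 0$. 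Combined with the conductor bound $Q_{\pi \otimes \chi_{8q_1}} \ll q_1^M \leq q^M$ supplied by \eqref{Qbound}, the error term simplifies to $\ll x^{1/2}\log^{2}(qx)$, with implicit constant depending only on $\pi$. This proves the lemma.

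The only real point to verify is the vanishing of the main term, which hinges on $L(s, \pi \otimes \chi_{8q_1})$ being holomorphic at $s = 1$; this is standard for twists of a cuspidal representation on $\text{GL}_M$ with $M \geq 2$ by a nontrivial Dirichlet character, so no genuine obstacle arises. The rest is a direct application of the explicit formula under GRH, together with bookkeeping of the conductor.
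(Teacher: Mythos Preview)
Your proposal is correct and follows essentially the same route as the paper: reduce to the primitive character $\chi_{8q_1}$ via the Ramanujan--Petersson bound, then apply \cite[Theorem~5.15]{iwakow} together with the conductor bound \eqref{Qbound}, noting that the main term vanishes because $M\geq 2$. Your justification for the vanishing of the main term (entirety of $L(s,\pi\otimes\chi_{8q_1})$) is exactly what underlies the paper's remark that ``the main term in \cite[(5.56)]{iwakow} equals to $0$ in our case since we have $M\geq 2$ here.''
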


    We now define
\begin{equation*}
  E(V;\pi, \chi_{8q}, \hat{\phi}) :=\sum_{p \leq V}
    \frac {a_{\pi}(p)\log p}{\sqrt {p}}\chi_{8q} (p)\hat{\phi} \left( \frac {\log p}{M\log X} \right),
\end{equation*}
   where $V \leq X^{B}$ for some bounded real number $B>0$. \newline

   It follows from Lemma \ref{lem3.2} and partial summation that
\begin{equation}
\label{3.1}
 E(V;\pi, \chi_{8q}, \hat{\phi}) \ll \log^{3} (qX).
\end{equation}

   Now on writing $d=l^2m $, we see that
\begin{equation} \label{error1}
\begin{split}
    S_R(X; \pi, \hat{\phi},W) &=
     \sum_{\substack{l>Z \\ (l, 2)=1}}\mu(l)\sum_{(m,2)=1}
     W \left( \frac{l^2m}{X} \right) E(Y;\chi_{8l^2m}, \hat{\phi})
     \ll \sum_{l>Z}\sum_{X/l^2 \leq m \leq
   2X/l^2} \log^{3}(X) \ll \frac {X \log^{3} X}{Z}.
\end{split}
\end{equation}

\subsection{Estimation of $S_M(X; \pi, \hat{\phi},W)$}

   First note that we have, after interchanging sumations,
\begin{align*}
 S_M(X; \pi, \hat{\phi}, W) = \sum_{p}
    \frac {a_{\pi}(p)\log p}{\sqrt {p}} \hat{\phi} \left( \frac {\log p}{M\log X} \right) \left( \frac{2}{p} \right)
\sum_{(d,2)=1}M_Z(d)\left( \frac{d}{p} \right) W\left( \frac{d}{X} \right).
\end{align*}

   Observe that
\begin{align*}
 \sum_{(d,2)=1}M_Z(d)\left( \frac{d}{p} \right) W\left( \frac{d}{X} \right)=\sum_{\substack{\alpha \leq Z \\ (\alpha, 2p)=1}}\mu(\alpha)
 \sum_{(d,2)=1}\left( \frac{d}{p} \right) W\left( \frac{d\alpha^2}{X} \right).
\end{align*}

   We recast the inner sum above as
\begin{align*}
 \sum_{(d,2)=1}\left( \frac{d}{p} \right) W\left( \frac{d\alpha^2}{X} \right)=\sum_{d}\left( \frac{d}{p} \right) W\left(
\frac{d\alpha^2}{X} \right)-\left( \frac{2}{p} \right)\sum_{d}\left( \frac{d}{p} \right) W\left( \frac{2 d\alpha^2}{X} \right).
\end{align*}

Now applying the Poisson summation formula in \eqref{PoissonsumQw} renders the above as
\begin{align*}
 \sum_{(d,2)=1}\left( \frac{d}{p} \right) W\left( \frac{d\alpha^2}{X} \right)=\frac {X}{\alpha^2\sqrt{p}}\sum_{m\geq1}\left( \frac{m}{p} \right)
\widetilde{W}_{\af}\left( \frac{mX}{\alpha^2p} \right)-\frac {X}{2\alpha^2\sqrt{p}}\sum_{m\geq 1}\left( \frac{2m}{p} \right)
\widetilde{W}_{\af}\left( \frac{mX}{2\alpha^2p} \right).
\end{align*}

This enables us to write $S_M(X; \pi, \hat{\phi}, W)$ as
\begin{align*}
 S_M(X; \pi, \hat{\phi}, W) =& X\sum_{(p,2)=1}
    \frac {a_{\pi}(p)\log p}{p} \hat{\phi} \left( \frac {\log p}{M\log X} \right)  \sum_{\substack { \alpha \leq Z \\ (\alpha,
2p)=1}}\frac
{\mu(\alpha)}{\alpha^2} \sum_{m\geq 1}\left( \frac{2m}{p} \right) \widetilde{W}_{\af}\left( \frac{mX}{\alpha^2p} \right) \\
& \hspace*{1cm} -\frac {X}{2} \sum_{(p,2)=1}
    \frac {a_{\pi}(p)\log p}{p} \hat{\phi} \left( \frac {\log p}{M\log X} \right)  \sum_{\substack { \alpha \leq Z \\ (\alpha,
2p)=1}}\frac
{\mu(\alpha)}{\alpha^2} \sum_{m\geq 1}\left( \frac{m}{p} \right) \widetilde{W}_{\af}\left( \frac{mX}{\alpha^2p} \right) \\
:=& S_1-S_2.
\end{align*}

    As the estimations are similar, we only deal with $S_1$ in what follows. We recast it as
\begin{align*}
 S_1 =& X \sum_{\substack { \alpha \leq Z \\ (\alpha, 2)=1}}\frac
{\mu(\alpha)}{\alpha^2}  \sum_{m \geq 1} \sum_{(p,2\alpha)=1}
    \frac {a_{\pi}(p)\log p}{p} \hat{\phi} \left( \frac {\log p}{M\log X} \right) \left( \frac{2m}{p} \right) \widetilde{W}_{\af}\left(
\frac{mX}{\alpha^2p} \right) \\
=& X \sum_{\substack { \alpha \leq Z \\ (\alpha, 2)=1}}\frac
{\mu(\alpha)}{\alpha^2}  \sum_{m \geq 1} \sum_{\substack{(p,\alpha)=1\\ p \equiv 1 \shortmod 4}}
    \frac {a_{\pi}(p)\log p}{p} \hat{\phi} \left( \frac {\log p}{M\log X} \right) \left( \frac{2m}{p} \right) \widetilde{W}_{0}\left(
\frac{mX}{\alpha^2p} \right) \\
& \hspace*{1cm} +X \sum_{\substack { \alpha \leq Z \\ (\alpha, 2)=1}}\frac
{\mu(\alpha)}{\alpha^2}  \sum_{m \geq 1} \sum_{\substack{(p,\alpha)=1\\ p \equiv -1 \shortmod 4}}
    \frac {a_{\pi}(p)\log p}{p} \hat{\phi} \left( \frac {\log p}{M\log X} \right) \left( \frac{2m}{p} \right) \widetilde{W}_{1}\left(
\frac{mX}{\alpha^2p} \right).
\end{align*}

   We then use $\frac 12 ( 1 \pm \chi_{-1}(p))$ to detect the conditions that $p \equiv \pm 1 \pmod 4$, thus further decomposing $S_1$ as
\begin{align*}
 S_1 =& S_{0, 1}+S_{0,-1}+S_{1, 1}-S_{1,-1},
\end{align*}
 where
\begin{align*}
 S_{i,j} =&  X \sum_{\substack { \alpha \leq Z \\ (\alpha, 2)=1}}\frac
{\mu(\alpha)}{2\alpha^2}  \sum_{m \geq 1} \sum_{\substack{(p,2\alpha)=1}}
    \frac {a_{\pi}(p)\log p}{p} \hat{\phi} \left( \frac {\log p}{M\log X} \right) \left( \frac{2jm}{p} \right) \widetilde{W}_{i}\left(
\frac{mX}{\alpha^2p} \right).
\end{align*}

  In the above expression for $S_{i,j}$, we may assume that $Z \leq X$ (in fact, our later choice of $Z$ is much smaller than $X$).  Again, as the estimations are similar, we only consider $S_{0,1}$ in what follows.  The condition $(p, \alpha)=1$ is detected
using $\chi_{\alpha^2}(p)$ and we use \eqref{3.1} to estimate $S_{0,1}$.  Mindful of the size of $Z$, we deduce via partial
   summation that
\begin{equation*}
\begin{split}
\sum_{\substack{(p,2\alpha)=1}} &
    \frac {a_{\pi}(p)\log p}{p} \hat{\phi} \left( \frac {\log p}{M\log X} \right) \left( \frac{2m}{p} \right) \widetilde{W}_{0}\left(
\frac{mX}{\alpha^2p} \right) =\int\limits^{Y}_1\frac {1}{\sqrt{V}}\widetilde{W}_0
    \left( \frac {m X}{\alpha^2V} \right) \dif E(V;\chi_{8\alpha^2m}, \pi, \hat{\phi}) \\
 \ll & \log^{3}(X(|m|+2))
    \left( \frac {1}{\sqrt{Y}}
   \left |\widetilde{W}\left (\frac {m
   X}{\alpha^2Y} \right ) \right |  +  \int\limits^{Y}_{1}\frac
   {1}{V^{3/2}}\left |\widetilde{W}
    \left (\frac {m X}{\alpha^2V} \right ) \right |  \dif V \right.  \left. + \int\limits^{Y}_{1}\frac
   {X}{\alpha^2V^{5/2}} \left |m\widetilde{W}'
    \left(\frac {m X}{\alpha^2V} \right ) \right | \dif V \right).
\end{split}
\end{equation*}

    It follows that
\begin{equation*}
 S_{0,1} \ll X\sum_{\alpha \leq Z}\frac {1}{\alpha^2}(R_1+R_2+R_3),
\end{equation*}
    where
\[ R_1 = \frac {1}{\sqrt{Y}} \sum_{m \neq 0}\log^{3}(X(|m|+2))\left |\widetilde{W}\left (\frac {m
   X}{\alpha^2Y}\right ) \right |, \;  R_2 = \int\limits^{Y}_{1}\frac
   {1}{V^{3/2}}\sum_{m \geq 1}\log^{3}(X(m+2)) \left |\widetilde{W}
    \left (\frac {m X}{\alpha^2V} \right ) \right | \dif V \]
and
\[   R_3 = \int\limits^{Y}_{1}\frac
   {X}{\alpha^2V^{5/2}}\sum_{m \geq 1}\log^{3}(X(m+2)) \left|m\widetilde{W}'
    \left(\frac {m X}{\alpha^2V} \right) \right | \dif V. \]

  Estimations for the above quantities can be found in \cite[(3.4)]{G&Zhao2} by setting $U=1$ there.  Hence we obtain that
\begin{equation}
\label{error4}
  S_{0,1} \ll Z \sqrt{Y}\log^{7} X.
\end{equation}

\subsection{Conclusion }  We now combine the bounds \eqref{error1},
\eqref{error4} and take $Z=\log^3 X$ with any fixed $\varepsilon>0$ to conclude that the estimation given in \eqref{Sorder} holds.  This completes the proof of Theorem \ref{quadraticmainthm}.

\vspace*{.5cm}

\noindent{\bf Acknowledgments.}   P. G. is supported in part by NSFC grant 11871082 and L. Z. by the Faculty Silverstar Grant PS65447 at the University of New South Wales (UNSW).  The authors would also like to thank the anonymous referee of his/her careful inspection of the paper and many helpful comments and suggestions.

\bibliography{biblio}
\bibliographystyle{amsxport}

\vspace*{.5cm}

\noindent\begin{tabular}{p{6cm}p{6cm}p{6cm}}
School of Mathematical Sciences & School of Mathematics and Statistics \\
Beihang University & University of New South Wales \\
Beijing 100191 China & Sydney NSW 2052 Australia \\
Email: {\tt penggao@buaa.edu.cn} & Email: {\tt l.zhao@unsw.edu.au} \\
\end{tabular}

\end{document}